\begin{document}

\title{Extractors in Paley graphs: a random model}

\author{\myname}
\address{\myaddress}
\email{\myemail}

\begin{abstract}
A well-known conjecture in analytic number theory states that for every pair of sets $X,Y\subset\mathbb{Z}/p\mathbb{Z}$, each of size at least $\log ^C p$ (for some constant $C$) we have that the number of pairs $(x,y)\in X\times Y$ such that $x+y$ is a quadratic residue modulo $p$ differs from $\frac12|X||Y|$ by $o\left(|X||Y|\right)$. We address the probabilistic analogue of this question, that is for every fixed $\delta>0$, given a finite group $G$ and $A\subset G$ a random subset of density $\frac12$, we prove that with high probability for all subsets $|X|,|Y|\geq \log ^{2+\delta} |G|$, the number of pairs $(x,y)\in X\times Y$ such that $xy\in A$ differs from $\frac12|X||Y|$ by $o\left(|X||Y|\right)$.
\end{abstract}
 
\maketitle


\section{Introduction}

A folklore result in analytic number theory states that if we let $Q\subset\mathbb{Z}/p\mathbb{Z}$ be the set of quadratic residues modulo a prime $p$, then for any function $w\colon \mathbb{N}\to \mathbb{R}$ tending to infinity and any pair of sets $X,Y\subset\mathbb{Z}/p\mathbb{Z}$ of size at least $w(p)\sqrt{p}$, the number of pairs $(x,y)\in X\times Y$ such that $x+y\in Q$ differs from $\frac12|X||Y|$ by $o\left(|X||Y|\right)$. Here, the rate of convergence implied by the $o$-notation depends only on $w$.

The proof of this is relatively simple. We refer the reader to Section~\ref{S:notation} for the notation used below. First of all, after setting $1_X\ast 1_Y(x) = \mathbb{E}_{z\in\mathbb{Z}/p\mathbb{Z}}1_X(z)1_Y(x-z)$, it is easy to see that the statement one wants to prove is equivalent to requiring that
$$\left|p \sum_{x\in \mathbb{Z}/p\mathbb{Z}}1_X\ast 1_Y(x)\chi(x)\right|=o(|X||Y|),$$
where $\chi$ is the quadratic character (i.e.\ $\chi(a)=(\frac{a}{p})$). For $r\in \mathbb{Z}/p\mathbb{Z}$ and a function $f\colon \mathbb{Z}/p\mathbb{Z}\to \mathbb{R}$, we define the corresponding Fourier coefficient by $\widehat{f}(r)=\mathbb{E}_{x\in\mathbb{Z}/p\mathbb{Z}}f(x)e^{2\pi ixr/p}$. Using standard formulas from Fourier analysis, the standard estimate for Gauss sums (see e.g.\ \cite{kowalskiiwaniec}, Section 3.5), and Cauchy-Schwarz inequality, we have
\begin{align*}
 \left|p \sum_{x\in \mathbb{Z}/p\mathbb{Z}}1_X\ast 1_Y(x)\chi(x)\right| &= \left|p^2\sum_{r\in \mathbb{Z}/p\mathbb{Z}}\widehat{1_X}(r)\widehat{1_Y}(r) \bar{\widehat{\chi}}(r)\right| \leq p^{3/2} \sum_{r\in \mathbb{Z}/p\mathbb{Z}} |\widehat{1_X}(r)| |\widehat{1_Y}(r)|\\
 &\leq \sqrt{p}(|X||Y|)^{1/2},
\end{align*}
and this proves the claim. Although this argument was quite straightforward, no significant improvement (in terms of lower bounds for $|X|$ and $|Y|$) is known, although it is widely believed to be true even for sets $X$ and $Y$ of sizes at least $\log ^C p$ for some constant $C$ -- a conjecture known in some literature \cite{flat} as the Paley graph conjecture.

Given that the set of quadratic residues is believed to have many properties in common with a random set of the same size, it is natural to ask whether the statement above is true if we replace $Q$ with a genuinely random set. In this paper we give the positive answer to this question, that is we prove the following theorem.

\begin{theorem}\label{T:mainadditive}
 Let $G$ be a group of size $N$ and $w\colon \mathbb{N}\to \mathbb{R}$ some function that tends to infinity. Let $A\subset G$ be a random subset obtained by putting every element of $G$ into $A$ independently with probability $\frac12$. Then the following holds with probability $1-o(1)$: for all sets $X,Y\subset G$, $|X|,|Y|\geq w(N)\log ^2 N$, the number of pairs $(x,y)\in X\times Y$ such that $xy\in A$ differs from $\frac12|X||Y|$ by $o\left(|X||Y|\right)$. The rate of convergence implied by the $o$-notation depends only on $w$.
\end{theorem}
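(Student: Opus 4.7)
The plan is to recast the sum in terms of the Rademacher random variables $\xi_g := 2\cdot 1_A(g)-1\in\{\pm 1\}$. Setting $r_{X,Y}(g) := \#\{(x,y)\in X\times Y : xy = g\}$, the quantity to control becomes
\[
 S(X,Y) := \sum_{(x,y)\in X\times Y}\bigl(1_A(xy) - \tfrac12\bigr) = \tfrac12\sum_{g\in G} r_{X,Y}(g)\,\xi_g,
\]
a linear form in independent $\pm 1$ variables. Write $m_0 := \lceil w(N)\log^2 N\rceil$. A first step is a greedy partitioning reduction: any $X, Y$ with $|X|, |Y| \geq m_0$ can be decomposed into blocks of size in $[m_0, 2m_0]$, and since $S$ is bilinearly additive across such a decomposition the desired inequality for arbitrary sets follows from the one for blocks of this fixed size range.

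For a fixed pair $(X, Y)$ of size $m\in[m_0, 2m_0]$, Hoeffding's inequality on the weighted Rademacher sum gives
\[
 \Pr\bigl[|S(X,Y)| > \varepsilon m^2\bigr] \leq 2\exp\!\Bigl(-\frac{c\,\varepsilon^2 m^4}{E(X,Y)}\Bigr), \qquad E(X,Y) := \sum_g r_{X,Y}(g)^2.
\]
Using only the crude estimate $E(X,Y)\leq m^3$ (from $\max_g r_{X,Y}(g)\leq m$) yields a per-pair tail of only $\exp(-c\varepsilon^2 m)$, which falls short of the union-bound cost $\binom{N}{m}^2\approx\exp(O(m\log N))$ by exactly one factor of $\log N$---precisely the factor that the jump from $\log N$ to $\log^2 N$ in the set-size threshold is designed to absorb.

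To recover the missing factor I would split the pairs dyadically according to their energy, with $\mathcal P_k := \{(X,Y) : E(X,Y)\in[2^{k-1}m^2, 2^k m^2]\}$ for $k = 0, 1, \ldots, O(\log m)$. On $\mathcal P_k$, Hoeffding gives the stronger tail $\exp(-c\varepsilon^2 m^2/2^k)$, so the union bound over the shell succeeds provided one has a matching counting estimate: pairs of size-$m$ subsets of $G$ whose multiplicative energy exceeds a given dyadic threshold above the mean $m^2$ must be correspondingly rare. Such an estimate should follow (in an appropriate quantitative form) from Pl\"unnecke--Ruzsa-type inequalities, or, in the noncommutative setting, from dependent random choice or the noncommutative Balog--Szemer\'edi--Gowers theorem. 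Summing the per-shell bounds and reversing the partitioning reduction then concludes the argument.

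The main obstacle is this structural counting estimate at the large dyadic energy scales: it must save precisely one extra factor of $\log N$ uniformly across scales, and must hold in a possibly noncommutative group. The remaining ingredients (the scale reduction, the per-pair Hoeffding bound, and the final assembly of the union bound) are essentially routine once the counting is in place.
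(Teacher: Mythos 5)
Your overall plan---bilinear Hoeffding plus a union bound stratified by the energy $E(X,Y)$---identifies the right tension (the per-pair tail must beat the $\binom{N}{m}^2$ union cost), but the ``structural counting estimate'' you defer is not a routine gap to fill; it is the whole difficulty, and I do not believe it is available. What you need is roughly: for every dyadic $K$, the number of pairs of $m$-element subsets with $E(X,Y)\geq K m^2$ is at most $\exp(c\varepsilon^2 m^2/K)$, uniformly over $K\in[1,m]$, in an arbitrary finite group. Nothing of this shape follows from Pl\"unnecke--Ruzsa or Balog--Szemer\'edi--Gowers: those describe the structure of a \emph{single} high-energy pair, not a count of how many such pairs exist, and the quantitative losses in BSG are far too large to produce a bound uniform across all scales. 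Even in $\mathbb{Z}/N\mathbb{Z}$ no inverse-counting theorem at intermediate energy (say $K\approx\sqrt{m}$) is known. A secondary point: your remark that the jump from $\log N$ to $\log^2 N$ ``absorbs'' a missing $\log N$ factor is not right as stated---the per-pair tail $e^{-c\varepsilon^2 m}$ and the union cost $e^{O(m\log N)}$ are both linear in $m$ in the exponent, so raising the threshold on $m$ does not by itself close that gap.

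The paper sidesteps the counting problem entirely with a Croot--Sisask-style replacement. Given $(X,Y)$, one passes to \emph{random} subsets $S\subset X$, $T\subset Y$ of carefully chosen sizes $s\ll|X|$, $t\ll|Y|$. Lemma~\ref{L:closenessSTXY} shows the normalised inner product $\langle N^2\cdot 1_S\ast 1_T/(st),f_A\rangle$ is on average close to $\langle N^2\cdot 1_X\ast 1_Y/(|X||Y|),f_A\rangle$, while Lemma~\ref{L:additive} shows the energy $\|1_S\ast 1_T\|_2^2$ is automatically small on average---the subsampling destroys exactly the structure responsible for high energy, whatever it was. One then union-bounds over the much smaller collection of candidate $(S,T)$, never needing to count high-energy $(X,Y)$ at all. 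The $\log^2 N$ threshold enters through the constraint $s\leq t$ with the choices $s\asymp\log N/\varepsilon^4$ and $t\asymp|Y|\varepsilon^2/\log N$, a role quite different from the one you attribute to it.
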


If for a pair of subsets $X,Y\subset G$ we have that the number of pairs $(x,y)\in X\times Y$ such that $xy\in A$ differs from $\frac12|X||Y|$ by $\epsilon |X||Y|$, we will say that it is \emph{$\epsilon$-extracted} by the set $A$. If all the pairs of subsets of size as in the previous theorem are $\epsilon$-extracted by the set $A$, we will say that $A$ is an \emph{$\epsilon$-extractor}. Theorem \ref{T:mainadditive} shows, in this terminology, that a random subset of $G$ is $o(1)$-extractor with high probability. The reason for this terminology will be explained in Section \ref{S:furthercomments}.

A Fourier approach, as above but using Chernoff-type estimates for the Fourier coefficients instead of Gauss sum estimates, suffices to prove Theorem \ref{T:mainadditive} when, say, $|X|,|Y|\geq N^{0.51}$. Unfortunately, this argument doesn't work for sets of size smaller than $\sqrt{N}$.

In Section~\ref{S:mainargument} we will present a different argument that will enable us to prove Theorem~\ref{T:mainadditive}. A very important part of the argument is Proposition~\ref{P:crucial}, which we prove in Section~\ref{S:proofcrucial}. Section \ref{S:boundsforXY} is devoted for proving a bound on the sizes of $X$ and $Y$ for which Theorem \ref{T:mainadditive} doesn't hold. Namely, we prove that Theorem \ref{T:mainadditive} doesn't hold if we consider sets $X$ and $Y$ of size at least $C\log N\log\log N$, for arbitrary large constant $C>0$. Finally, in Section~\ref{S:furthercomments} we give some open problems left after this paper and explain the connection with randomness extractors.

Before moving on to the proof of the main theorem, let us also mention that one could ask and answer the analogous question in the Erd\H{o}s-R\'{e}nyi setting, that is one can easily prove the following folklore theorem.

\begin{theorem}\label{T:erdosrenyi}
 Let $G=(V,E)$ be a graph sampled as in the Erd\H{o}s-R\'{e}nyi model $G(N,\frac12)$, that is $G$ has a vertex set $V$ of size $N$ and contains each possible edge with probability $\frac12$, and these choices are all made independently. Let $w\colon \mathbb{N}\to \mathbb{R}$ be some function that tends to infinity. Then the following holds with probability $1-o(1)$: for all sets $X,Y\subset V$, $|X|,|Y|\geq w(N)\log  N$, $\frac12+o(1)$ of all the possible edges connecting an element of $X$ and an element of $Y$ are contained in $E$. The rate of convergence implied by the $o$-notation depends only on $w$.
\end{theorem}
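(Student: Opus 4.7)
The plan is a standard Chernoff-plus-union-bound. Choose $\epsilon=\epsilon(N)$ tending to zero slowly enough that $\epsilon(N)^{2}\,w(N)\to\infty$, for instance $\epsilon(N)=w(N)^{-1/3}$. I will show that with probability $1-o(1)$, every pair $(X,Y)$ with $|X|,|Y|\geq w(N)\log N$ has edge density within $\epsilon(N)$ of $\tfrac12$, which gives the statement of the theorem with a rate depending only on $w$.

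For a fixed pair $X,Y\subset V$ of sizes $s,t$, let $M$ denote the number of possible edges with one endpoint in $X$ and the other in $Y$. A short case analysis (splitting into the three pieces $X\setminus Y$, $X\cap Y$, $Y\setminus X$) gives $\tfrac12 st\leq M\leq st$ once $s,t$ are large. The number $e(X,Y)$ of such edges actually in $G$ is a sum of $M$ independent Bernoulli$(\tfrac12)$ variables, so Hoeffding's inequality yields
\[
\Pr\!\left[\bigl|e(X,Y)-\tfrac{M}{2}\bigr|\geq \epsilon(N)\,M\right]\leq 2\exp\!\bigl(-2\epsilon(N)^{2}M\bigr)\leq 2\exp\!\bigl(-\epsilon(N)^{2}st\bigr).
\]

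Union-bounding over all pairs of sizes $s,t$, and using $\binom{N}{s}\binom{N}{t}\leq\exp\!\bigl(2(s+t)\log N\bigr)$, the probability that some admissible pair fails is at most
\[
\sum_{s,t\geq s_{0}}2\exp\!\bigl(2(s+t)\log N-\epsilon(N)^{2}st\bigr),
\]
where $s_{0}:=w(N)\log N$. Since $s,t\geq s_{0}$ forces $st\geq \tfrac12 s_{0}(s+t)$, the exponent is at most $(s+t)\log N\cdot\bigl(2-\tfrac12\epsilon(N)^{2}w(N)\bigr)\leq -(s+t)\log N$ once $N$ is large, by the choice of $\epsilon(N)$. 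The double sum is then bounded by $\bigl(\sum_{s\geq s_{0}}N^{-s}\bigr)^{2}=O(N^{-2s_{0}})=o(1)$.

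The only quantitative point to watch is the interplay between $\epsilon(N)$ and $w(N)$: the union bound requires $\epsilon(N)^{2}\min(|X|,|Y|)\gg\log N$, which is exactly what the hypothesis $|X|,|Y|\geq w(N)\log N$ provides when $\epsilon(N)$ decays slowly. There is no substantive obstacle, which is precisely why Theorem~\ref{T:erdosrenyi} is folklore while Theorem~\ref{T:mainadditive} is not.
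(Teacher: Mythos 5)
The paper presents Theorem~\ref{T:erdosrenyi} as folklore and gives no proof, so there is nothing in the text to compare against; your Chernoff-plus-union-bound argument is the standard route and is essentially correct. The only slip is the claim $\tfrac12 st\leq M$: with $k=|X\cap Y|$ one has $M=st-\tfrac12 k(k+1)$, so for instance when $X=Y$ has size $s$ we get $M=\binom{s}{2}<\tfrac12 s^2$. The fix is immediate, since $M\geq\tfrac12 s(t-1)\geq\tfrac13 st$ once $t\geq 3$, and replacing $\tfrac12$ by $\tfrac13$ merely rescales the Hoeffding exponent by an absolute constant, leaving the union bound intact. Everything else --- the bound $st\geq\tfrac12 s_0(s+t)$ for $s,t\geq s_0$, the choice $\epsilon(N)=w(N)^{-1/3}$ so that $\epsilon(N)^2 w(N)\to\infty$, and the final geometric sum giving $O(N^{-2s_0})=o(1)$ --- is fine, and the resulting rate of convergence depends only on $w$, as required. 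This argument also makes transparent exactly why the method breaks for Theorem~\ref{T:mainadditive}: there the number of \emph{independent} coin flips entering the Hoeffding bound is no longer on the order of $|X||Y|$ but only about $N^3\|1_X\ast 1_Y\|_2^{-2}$, which can be as small as $\max(|X|,|Y|)$, and then the union bound over $\binom{N}{|X|}\binom{N}{|Y|}$ pairs fails.
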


\textsl{Acknowledgements.} I would like to thank Sean Eberhard, Ben Green, Jakub Konieczny and Freddie Manners for helpful discussions, and two anonymous reviewers for carefully reading the paper and giving valuable comments. I would also like to thank the Mathematical Institute, University of Oxford for funding my research.

\section{Notation}\label{S:notation}

Although most of the notation and conventions were implicitly introduced in the previous section, we include them here for the reader's convenience. The logarithm with base $2$ will be denoted by $\log_2$, and natural logarithm by $\log$. We will use the standard $O$-notation. To be concrete, for functions $f,g\colon \mathbb{N}\to \mathbb{R}$ we will write $f(n)=O(g(n))$ and $|f(n)|\lesssim g(n)$ if $|f(n)|\leq Cg(n)$ for some constant $C>0$. We will write $f(n)=o(g(n))$ if $f(n)/g(n)\to 0$ as $n$ tends to infinity. All the groups we will work with will be finite, and we equip each of them with the uniform probability measure. Of course, this reflects in our definitions of convolution, inner product and $\ell^2$-norm. We will also use $\mathbb{E}$ notation to denote the average over a set, thus
\begin{align*}
f\ast g(x) &= \mathbb{E}_{y\in G}f(y)g(y^{-1}x),\\
\langle f,g\rangle &= \mathbb{E}_{x\in G}f(x)g(x),\\
\|f\|_2 &= \left(\mathbb{E}_{x\in G}|f(x)|^2\right)^{1/2}.\\
\end{align*}

\section{The main argument}\label{S:mainargument}

We will use the notation introduced in Theorem \ref{T:mainadditive}. Define $\epsilon(N)=(10^5/w(N))^{1/6}$. We are going to prove that with probability only $o(1)$ there exist sets $X,Y\subset G$, both of size at least $w(N)\log ^2 N$ such that the proportion of pairs $(x,y)\in X\times Y$ satisfying $xy\in A$ differs from $\frac12$ by at least $\epsilon(N)$. In order to make the random variables that will appear later centred, we associate to the set $A$ the function $f_A(x)=2\cdot 1_A-1$. Note that $f_A$ takes values in $\{-1,1\}$. From this perspective, it is easily seen that we are actually interested in whether
$$\left|\left\langle \frac{N^2\cdot1_X\ast 1_Y}{|X||Y|}, f_A\right\rangle\right| \geq 2\epsilon(N).$$
We can try to bound this using the following classical Chernoff-type concentration result by Hoeffding \cite{hoeffding}.

\begin{proposition}[Hoeffding]\label{P:concentration}
Let $X_1,\dots, X_n$ be independent bounded random variables with mean $0$. Then for every $\lambda\geq 0$ we have
$$\mathbb{P}\left(\left|\sum_iX_i\right|\geq \lambda\sqrt{\sum_i\|X_i\|_{\infty}^2}\right)\lesssim \exp(-\lambda^2/2).$$
\end{proposition}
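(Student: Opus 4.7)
The plan is to apply the classical exponential-moment (Chernoff) method. Writing $M_i:=\|X_i\|_\infty$, so $|X_i|\le M_i$ almost surely and $\mathbb{E}X_i=0$, I would first use Markov's inequality applied to $\exp(t\sum_i X_i)$ for a parameter $t>0$; independence of the $X_i$ then factorizes the moment generating function, giving
\[
\mathbb{P}\!\left(\sum_i X_i\ge s\right)\le e^{-ts}\prod_i\mathbb{E}[e^{tX_i}].
\]

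The crucial ingredient is Hoeffding's lemma: for a mean-zero random variable $X$ with $|X|\le M$, one has $\mathbb{E}[e^{tX}]\le e^{t^2M^2/2}$. I would prove it via convexity of $x\mapsto e^{tx}$: on $[-M,M]$ the exponential lies below the chord through $(\pm M, e^{\pm tM})$, so taking expectations (using $\mathbb{E}X=0$) yields $\mathbb{E}[e^{tX}]\le\cosh(tM)$, and the standard power-series comparison $\cosh(u)\le e^{u^2/2}$ finishes the job. Substituting into the product and optimizing over $t$ by setting $t=s/\sum_i M_i^2$ gives
\[
\mathbb{P}\!\left(\sum_i X_i\ge s\right)\le \exp\!\left(-\tfrac{s^2}{2\sum_i M_i^2}\right).
\]
Choosing $s=\lambda\sqrt{\sum_i M_i^2}$ produces the desired $\exp(-\lambda^2/2)$ bound for the upper tail. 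An identical argument applied to the variables $-X_i$ handles the lower tail, and a union bound over the two events absorbs the resulting factor of $2$ into the implicit constant in the $\lesssim$.

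There is no real obstacle: the proposition is entirely classical and each ingredient (Markov, independence, Hoeffding's lemma, optimization in $t$) is a one-line manipulation. If any step deserves the label \emph{main}, it is Hoeffding's lemma itself, but this is a textbook consequence of the convexity of the exponential and involves no probabilistic subtlety.
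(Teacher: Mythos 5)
Your proof is correct and is the standard Chernoff/exponential-moment argument with Hoeffding's lemma; the paper itself does not reproduce a proof but simply cites Hoeffding's original paper, which contains essentially this same argument. Nothing to fault here: the chord bound giving $\mathbb{E}[e^{tX}]\le\cosh(tM)\le e^{t^2M^2/2}$, the optimization $t=s/\sum_i M_i^2$, and the two-sided union bound all check out and yield the stated $\lesssim\exp(-\lambda^2/2)$.
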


Applying this to our problem we see that for fixed $X$ and $Y$ we have
$$\mathbb{P}\left(\left|\left\langle \frac{N^2\cdot 1_X\ast 1_Y}{|X||Y|}, f_A\right\rangle\right|\geq 2\epsilon(N)\right)\lesssim \exp\left(-\frac{2\epsilon(N)^2|X|^2|Y|^2}{N^3\|1_X\ast 1_Y\|_2^2}\right).$$
This looks promising but is still too weak for the union bound (over all sets of fixed sizes) to be effective; concretely, it is easy to see that the problem is caused by pairs $(X,Y)$ for which $\|1_X\ast 1_Y\|_2^2$ is close to maximal possible (that is $|X||Y|\min(|X|, |Y|)$).

Our approach is the following. We will prove that whenever there are pairs of sets $X$ and $Y$ with an unusual proportion of elements whose product is in $A$, we can find some related sets $S$ and $T$ that share this property, but, crucially, such that $\|1_S\ast 1_T\|_2^2$ is significantly smaller than the maximum possible so the final union bound would be effective. This rough idea of relating arbitrary sets to unstructured ones and then continue working with these is already present in some previous works and it goes back at least as far as \cite{visibility}.

The following proposition gives us sets $S$ and $T$ with these nice properties. 

\begin{proposition}\label{P:crucial}
Let $f\colon G\to\{-1,1\}$ be any function, and $X,Y\subset G$ both of size at least $w(N)\log ^2 N$ and $|X|\leq |Y|$. There exist sets $S,T\subset G$ satisfying the following properties
\begin{enumerate}
\item $|S|\leq |T|$ and $|T|\to\infty$,
\item $\displaystyle\left|\left\langle\frac{N^2\cdot 1_S\ast 1_T}{|S||T|},f\right\rangle - \left\langle\frac{N^2\cdot 1_X\ast 1_Y}{|X||Y|},f\right\rangle\right| \leq \epsilon(N),$
\item $\displaystyle\frac{|S|^2|T|^2}{N^3\|1_S\ast 1_T\|_2^2} \geq \frac{6|T|\log  N}{\epsilon(N)^2}.$
\end{enumerate}
\end{proposition}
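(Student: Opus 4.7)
The plan is to take $S\subset X$ and $T\subset Y$ to be independent uniform random subsets of carefully chosen sizes $s\le t$, and show by a union bound that both properties (2) and (3) hold simultaneously with positive probability. Introduce the shorthand $\rho(X,Y)=|X|^2|Y|^2/(N^3\|1_X\ast 1_Y\|_2^2)$, so that (3) reads $\rho(S,T)\ge 6|T|\log N/\epsilon(N)^2$. If $\rho(X,Y)$ is already at least $6|Y|\log N/\epsilon(N)^2$, then the choice $(S,T)=(X,Y)$ works verbatim, with $|T|=|Y|\to\infty$ from the size hypothesis. Otherwise I set $s=\lceil 48\log N/\epsilon(N)^2\rceil$ and $t=\lfloor\rho(X,Y)\epsilon(N)^2/(48\log N)\rfloor$; the crude bound $\rho(X,Y)\ge|Y|\ge w(N)\log^2 N$, combined with the explicit $\epsilon(N)=(10^5/w(N))^{1/6}$, gives $s\le t$ and $t\to\infty$ once $w$ is large enough.

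For property (3), observe that $\|1_S\ast 1_T\|_2^2=R(S,T)/N^3$ where $R(S,T)=|\{(s_1,s_2,t_1,t_2)\in S^2\times T^2:s_1t_1=s_2t_2\}|$, so (3) is equivalent to $R(S,T)\le|S|^2|T|\epsilon(N)^2/(6\log N)$. Splitting the quadruples by whether $s_1=s_2$ (which forces $t_1=t_2$) yields
$$\mathbb{E}\,R(S,T)\;\le\;st+\frac{s^2t^2}{|X|^2|Y|^2}R(X,Y),$$
and with the above choice of $s,t$ each summand is bounded by $s^2t\epsilon(N)^2/(48\log N)$. Markov's inequality then gives (3) with probability at least $3/4$.

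For property (2), set $g(S,T)=\mathbb{E}_{s\in S,\,t\in T}f(st)$, so $\mathbb{E}\,g(S,T)=g(X,Y)$ by linearity. Expanding the second moment by the same collision casework, the off-diagonal contributions are controlled by the ``row'' and ``column'' energies $U=\sum_{x\in X}\bigl(\sum_{y\in Y}f(xy)\bigr)^2$ and $V=\sum_{y\in Y}\bigl(\sum_{x\in X}f(xy)\bigr)^2$. The crucial point is that $|f|\le 1$ alone gives the sup-norm bounds $U\le|X||Y|^2$ and $V\le|X|^2|Y|$, which after normalizing by $(st)^2$ produces
$$\mathrm{Var}\,g(S,T)\;\le\;\tfrac{1}{s}+\tfrac{1}{t}+\tfrac{1}{st}\;\le\;\tfrac{3}{s}\;=\;\frac{\epsilon(N)^2}{16\log N}.$$
Chebyshev's inequality then gives $|g(S,T)-g(X,Y)|\le\epsilon(N)$ outside a set of probability $O(1/\log N)$, and a union bound with the Markov estimate above produces a realization of $(S,T)$ satisfying (1), (2), and (3).

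The step I expect to be most delicate is the variance bound for (2). A priori, for arbitrary $\pm 1$-valued $f$, the variance of the sample sum $\sum_{s,t}f(st)$ can be as large as $st$, which would demand $st\gtrsim N^2/\epsilon^2$ for $\epsilon$-concentration---hopeless when $|X|,|Y|\le N$. The resolution is that we are averaging, not summing: the denominator $|S||T|$ cancels the combinatorial factor, and the variance of the \emph{average} then depends on $f$ only through the row and column energies $U$ and $V$, both of which are already controlled by the trivial $|f|\le 1$ bound. Once this observation is secured, the rest of the argument is routine size bookkeeping to confirm that $s,t$ sit in the correct range relative to $w(N)$ and $\epsilon(N)$.
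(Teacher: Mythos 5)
Your proposal is correct, and for the heart of the argument (property (2)) it takes a genuinely different route from the paper. Both proofs draw $S\subset X$, $T\subset Y$ uniformly among subsets of prescribed sizes $s,t$, use the same first-moment bound on $\mathbb{E}\,\|1_S\ast 1_T\|_2^2$ (your collision count $R(S,T)$ is exactly the paper's additive-energy estimate), and close with Markov plus a union bound. The divergence is in controlling the deviation of $g(S,T)$ from $g(X,Y)$. The paper argues in $L^1$: it partitions $X\times Y$ into ``diagonals'' $\Omega_i = \{(j,j+i)\}$ so that each $\Omega_i$ contains no repeated $x$ or $y$, splits further by the sign of $f(xy)$, and applies Cauchy--Schwarz within each diagonal, using negative covariance from sampling without replacement to get a clean per-diagonal variance bound. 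You instead compute $\mathrm{Var}\,g(S,T)$ once and for all by expanding over the four collision types $(x_1\overset{?}{=}x_2,\ y_1\overset{?}{=}y_2)$; the fully-distinct case is dominated (up to a nonpositive correction) by $g(X,Y)^2$ and cancels, and the two partial-collision cases are precisely what give rise to your row and column energies $U$ and $V$, which are trivially at most $|X||Y|^2$ and $|X|^2|Y|$. This yields $\mathrm{Var}\,g(S,T)\leq \tfrac1s+\tfrac1t+\tfrac1{st}$, which is in fact slightly stronger than the $\mathbb{E}|g(S,T)-g(X,Y)|\leq 2\sqrt{|Y|/st}$ that the paper's $\Omega_i$-decomposition delivers (the two agree up to constants at the paper's choice of parameters, but your bound is independent of $|Y|$). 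Your direct route avoids the diagonal bookkeeping and the sign-splitting entirely, at the cost of a somewhat longer expansion of the second moment. The one cosmetic difference is the case split on whether $\rho(X,Y)=|X|^2|Y|^2/(N^3\|1_X\ast 1_Y\|_2^2)$ is already large; the paper sidesteps this by always taking $t=|Y|\epsilon(N)^2/(50\log N)<|Y|$ and $s=2000\log N/\epsilon(N)^4$, and then using $\rho(X,Y)\geq |Y|$ inside the minimum rather than in the choice of $t$. Both bookkeeping schemes work; yours is a matter of taste.
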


We give a proof of this proposition in the next section, and here finish the proof of the main theorem conditional on it. 

Suppose that there are sets $X,Y$ of size at least $w(N)\log ^2 N$ such that $\left|\left\langle \frac{N^2\cdot 1_X\ast 1_Y}{|X||Y|}, f_A\right\rangle\right| \geq 2\epsilon(N)$. Take subsets $S$ and $T$ given by Proposition~\ref{P:crucial}; by the triangle inequality we have $\left|\left\langle \frac{N^2\cdot 1_S\ast 1_T}{|S||T|}, f_A\right\rangle\right| \geq \epsilon(N)$. However, by our choice of $S$ and $T$ this is very unlikely to happen; applying Hoeffding's inequality as above gives that the probability is bounded above by
$$\exp\left( -\frac{\epsilon(N)^2|S|^2|T|^2}{2N^3\|1_S\ast 1_T\|_2^2}\right) \leq \exp\left( -3|T|\log  N\right).$$
Now we do the union bound over all sets $S$ and $T$ of fixed sizes satisfying the properties stated in Proposition~\ref{P:crucial}. We use a crude bound $N^{2|T|}$ for the number of these sets (because $|S|\leq |T|$). The contribution coming from these sets is hence bounded by
$$N^{2|T|} e^{-3|T|\log  N} = e^{-|T|\log  N}.$$
What remains is to sum over all possible sizes of $S$ and $T$. There are $N^2$ terms and since the size of $T$ tends to infinity, the total sum is still $o(1)$.

\section{Proof of the main proposition}\label{S:proofcrucial}

In this section we give the probabilistic proof of Proposition~\ref{P:crucial}. The idea is to take a random subset $S\subset X$ uniformly from all subsets of some fixed size $s$, and a random subset $T\subset Y$ uniformly from all subsets of some fixed size $t$. Intuitively, we believe that $1_S\ast 1_T$ should in some way imitate $1_X\ast 1_Y$. This idea is already present in the work of Croot and Sisask \cite{crootsisask}. Additionally, if $s$ and $t$ are somewhat smaller than $|X|$ and $|Y|$, $\|1_S\ast 1_T\|_2^2$ should be smaller than the theoretical maximum, even if $X$ and $Y$ are highly structured. 

We formalise the outlined strategy. In the following, $S$ and $T$ are sets chosen randomly as described, and hence instances of $\mathbb{P}$ (probability) and $\mathbb{E}$ (expectation) notation are over this random choice, unless indicated otherwise. We first prove that the additive energy is on average small.

\begin{lemma}\label{L:additive}
$$\mathbb{E}\|1_S\ast 1_T\|_2^2 \leq \frac{st}{N^3} + \frac{s^2t^2}{|X|^2|Y|^2}\|1_X\ast 1_Y\|_2^2.$$
\end{lemma}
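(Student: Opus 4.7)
The plan is to open up $\|1_S\ast 1_T\|_2^2$ as a counting expression over quadruples and then compute the expectation term by term, exploiting that the only source of slack is the diagonal.

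First I would write
\[
 \|1_S\ast 1_T\|_2^2 \;=\; \frac{1}{N^3}\,\bigl|\{(a,b,c,d)\in S\times T\times S\times T : ab=cd\}\bigr|,
\]
which follows directly from the definition $f\ast g(x)=\mathbb{E}_{y\in G}f(y)g(y^{-1}x)$ and the conventions of Section~\ref{S:notation}. I would then split the quadruples according to whether $a=c$ (equivalently $b=d$, since $ab=cd$). The diagonal contributes exactly $|S||T|=st$ quadruples, and the off-diagonal is a subset of $\{(a,b,c,d)\in X^2\times Y^2 : ab=cd,\ a\neq c\}$.

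Next I would take expectations. Because $S$ is a uniformly chosen $s$-subset of $X$ and $T$ an independently chosen $t$-subset of $Y$, the diagonal contribution has deterministic size $st$, while each off-diagonal quadruple $(a,b,c,d)$ with $a\neq c$, $b\neq d$ contributes
\[
 \mathbb{P}(a,c\in S)\,\mathbb{P}(b,d\in T) \;=\; \frac{s(s-1)}{|X|(|X|-1)}\cdot\frac{t(t-1)}{|Y|(|Y|-1)} \;\leq\; \frac{s^2 t^2}{|X|^2|Y|^2},
\]
where the inequality uses $\tfrac{s-1}{|X|-1}\leq\tfrac{s}{|X|}$ (valid since $s\leq|X|$) and similarly for $t$. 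Summing over off-diagonal quadruples and bounding that count by the full quadruple count in $X^2\times Y^2$, which is precisely $N^3\|1_X\ast 1_Y\|_2^2$, gives
\[
 \mathbb{E}\|1_S\ast 1_T\|_2^2 \;\leq\; \frac{st}{N^3} \;+\; \frac{s^2t^2}{|X|^2|Y|^2}\,\|1_X\ast 1_Y\|_2^2,
\]
as claimed.

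There isn't really a hard step here: this is a standard second-moment computation for a uniform sample without replacement, and the only minor care needed is keeping track of the $1/N^3$ normalisation coming from the expectation convention for convolution, and verifying that the hypergeometric second moment $\tfrac{s(s-1)}{|X|(|X|-1)}$ can be cleanly upper-bounded by $s^2/|X|^2$ so that one can drop the $-1$'s without losing anything that matters for the target bound. The rest is bookkeeping.
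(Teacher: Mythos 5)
Your proof is correct and follows essentially the same route as the paper's: expand $\|1_S\ast 1_T\|_2^2$ as a quadruple count with the $1/N^3$ normalisation, split into the diagonal (exactly $st$ quadruples, deterministic) and off-diagonal, use the hypergeometric probability $\tfrac{s(s-1)}{|X|(|X|-1)}\tfrac{t(t-1)}{|Y|(|Y|-1)}\le \tfrac{s^2t^2}{|X|^2|Y|^2}$ for each off-diagonal quadruple, and bound the number of off-diagonal quadruples in $X^2\times Y^2$ by the full count $N^3\|1_X\ast 1_Y\|_2^2$. This matches the paper's Lemma~\ref{L:additive} proof step for step, just with the bookkeeping spelled out a bit more.
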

\begin{proof}
Note that the left hand side is equal to
$$\frac{st}{N^3} + \frac{1}{N^3}\sum \mathbb{P}(x_1,x_2\in S, y_1,y_2\in T),$$
where the sum is taken over $x_1,x_2\in X$, $y_1,y_2\in Y$ satisfying $x_1\neq x_2$, $y_1\neq y_2$, $x_1y_1=x_2y_2$. This is obviously bounded by
$$\frac{st}{N^3} + \|1_X\ast 1_Y\|_2^2\cdot \frac{s(s-1)}{|X|(|X|-1)}\frac{t(t-1)}{|Y|(|Y|-1)},$$
and hence we are done.
\end{proof}

\begin{lemma}\label{L:closenessSTXY}
For any function $f\colon G\to\{-1,1\}$ we have
 $$\mathbb{E}\left|\left\langle\frac{N^2\cdot 1_S\ast 1_T}{|S||T|},f\right\rangle - \left\langle\frac{N^2\cdot 1_X\ast 1_Y}{|X||Y|},f\right\rangle\right| \leq 2\sqrt{\frac{|Y|}{st}}$$
\end{lemma}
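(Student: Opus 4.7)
The plan is to unpack the inner products into double sums and then bound the resulting difference of a sample mean and a population mean by a two-stage variance calculation. Observe that
\[
\left\langle \frac{N^2\cdot 1_X\ast 1_Y}{|X||Y|}, f\right\rangle = \frac{1}{|X||Y|}\sum_{(a,b)\in X\times Y} f(ab),
\]
and analogously for $(S,T)$, so the quantity of interest is $\mathbb{E}|\Delta|$ where
\[
\Delta = \frac{1}{st}\sum_{(a,b)\in S\times T} f(ab) - \frac{1}{|X||Y|}\sum_{(a,b)\in X\times Y} f(ab).
\]

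I would then insert the intermediate average $\frac{1}{s|Y|}\sum_{a\in S, b\in Y} f(ab)$ to split $\Delta = \Delta_1 + \Delta_2$, where
\[
\Delta_1 = \frac{1}{st}\sum_{a\in S, b\in T} f(ab) - \frac{1}{s|Y|}\sum_{a\in S, b\in Y} f(ab), \qquad \Delta_2 = \frac{1}{s|Y|}\sum_{a\in S, b\in Y} f(ab) - \frac{1}{|X||Y|}\sum_{a\in X, b\in Y} f(ab).
\]
For $\Delta_2$, setting $g(a) = \frac{1}{|Y|}\sum_{b\in Y} f(ab)$ makes $\Delta_2$ precisely the deviation of the sample mean of $g$ over a uniform random $s$-subset $S\subset X$ from its population mean. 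Since $|g|\leq 1$, the standard variance bound for sampling without replacement (which is dominated by the i.i.d.\ bound $\sigma^2/s$) yields $\mathbb{E}\Delta_2^2\leq 1/s$, and Cauchy--Schwarz gives $\mathbb{E}|\Delta_2|\leq 1/\sqrt{s}$. For $\Delta_1$ I would condition on $S$ and set $h_S(b) = \frac{1}{s}\sum_{a\in S} f(ab)$; then $\Delta_1$ is the analogous deviation of the sample mean of $h_S$ over a uniform random $t$-subset $T\subset Y$, and since $|h_S|\leq 1$ the same argument gives $\mathbb{E}[\Delta_1^2\mid S]\leq 1/t$, hence $\mathbb{E}|\Delta_1|\leq 1/\sqrt{t}$.

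Combining by the triangle inequality, $\mathbb{E}|\Delta|\leq 1/\sqrt{s} + 1/\sqrt{t} \leq 2/\sqrt{s}$ using the hypothesis $s\leq t$, and this rewrites as $2\sqrt{t/(st)}\leq 2\sqrt{|Y|/(st)}$ since $T\subset Y$ forces $t\leq |Y|$. I do not expect a genuine obstacle here: the hypothesis that $f$ takes values in $\{-1,1\}$ is used only to ensure $|g|,|h_S|\leq 1$, and the two applications of the finite-population variance bound are classical. The one judgement call is the choice of telescoping through the intermediate average $\frac{1}{s|Y|}\sum_{a\in S, b\in Y} f(ab)$, but because the steps are symmetric in $(X,S)$ and $(Y,T)$ any such decomposition gives the same conclusion.
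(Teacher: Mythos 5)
Your proof is correct, and it takes a genuinely different and cleaner route than the paper's. The paper works directly with the quantity $\frac{1}{st}\mathbb{E}\bigl|\sum_{x\in X,y\in Y} f(xy)\bigl(1_S(x)1_T(y)-\mathbb{P}(x\in S,y\in T)\bigr)\bigr|$, splits by the sign of $f(xy)$, and then partitions $X\times Y$ into $|Y|$ ``diagonals'' $\Omega_i=\{(j,j+i)\}$ so that within each diagonal the $x$-coordinates are distinct and the $y$-coordinates are distinct; this makes the covariances nonpositive, so each diagonal's $L^2$-norm is at most $\sqrt{|\Omega_i|\,st/(|X||Y|)}$, and summing $|Y|$ such square roots yields the factor $\sqrt{|Y|}$ in the bound. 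Your argument instead telescopes through the intermediate average over $S\times Y$, which turns the difference into two one-dimensional sampling-without-replacement problems (sample mean of a bounded function $g$ over $S\subset X$, and, conditionally on $S$, of a bounded function $h_S$ over $T\subset Y$), each with variance at most $1/s$ and $1/t$ respectively. This isolates the same essential fact the paper exploits (the negative covariance of indicators under uniform sampling without replacement), but routes it through a standard one-variable variance bound and avoids the diagonal decomposition and the case split on the sign of $f$ entirely. One small remark: you invoke $s\le t$ to write $1/\sqrt s+1/\sqrt t\le 2/\sqrt s$, but the lemma itself does not assume $s\le t$; this is harmless because $(\sqrt s+\sqrt t)/\sqrt{st}\le 2\sqrt{|Y|}/\sqrt{st}$ follows directly from $s\le|X|\le|Y|$ and $t\le|Y|$, without comparing $s$ and $t$.
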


\begin{proof}
Notice that
\begin{align*}
 &\mathbb{E}\left|\left\langle\frac{N^2\cdot 1_S\ast 1_T}{|S||T|},f\right\rangle - \left\langle\frac{N^2\cdot 1_X\ast 1_Y}{|X||Y|},f\right\rangle\right| = \mathbb{E} \left| \sum_{z\in G}\sum_{w\in G}f(z)\cdot\left(\frac{1_S(w)1_T(w^{-1}z)}{st} - \frac{1_X(w)1_Y(w^{-1}z)}{|X||Y|}\right)\right|\\
 &= \mathbb{E} \left| \sum_{x\in X}\sum_{y\in Y}f(xy)\cdot\left(\frac{1_S(x)1_T(y)}{st} - \frac{1}{|X||Y|}\right)\right|\\
 &= \frac{1}{st}\mathbb{E} \left| \sum_{x\in X}\sum_{y\in Y}f(xy)\cdot\left(1_S(x)1_T(y) - \mathbb{P}(x\in S, y\in T)\right)\right|\\
\end{align*}
The idea now is to exploit the fact that lots of the random variables in the sum above are independent, and hence we expect a fair amount of cancellation. To do this we will partition the set $X\times Y$ into few parts, such that corresponding random variables in each part are (very close to) independent. This partition is easy to describe if we identify $X$ and $Y$ with $\{1,\dots,|X|\}$ and $\{1,\dots,|Y|\}$; for $i=1,\dots, |Y|$, let $\Omega_i=\{(j,j+i)\colon j\in \{1,\dots,|X|\}\}$ (with addition modulo $|Y|$). We first analyse the sum above when taken over all pairs with $f(xy)=1$; we will denote this set by $\mathcal{F}$. We have
\begin{align}
 &\frac{1}{st}\mathbb{E} \left| \sum_{(x,y)\in \mathcal{F}}\left(1_S(x)1_T(y) - \mathbb{P}(x\in S, y\in T)\right)\right| \leq \frac{1}{st}\sum_{i=1}^{|Y|}\mathbb{E}\left|\sum_{(x,y)\in \Omega_i\cap\mathcal{F}}\left(1_S(x)1_T(y) - \mathbb{P}(x\in S, y\in T)\right)\right|\nonumber\\
 &\leq \frac{1}{st}\sum_{i=1}^{|Y|}\left(\mathbb{E}\left(\sum_{(x,y)\in \Omega_i\cap\mathcal{F}}\left(1_S(x)1_T(y) - \mathbb{P}(x\in S, y\in T)\right)\right)^2\right)^{1/2}\nonumber\\
 &= \frac{1}{st}\sum_{i=1}^{|Y|} \left(\sum_{(x_j,y_j)\in \Omega_i\cap\mathcal{F}}\left(\mathbb{P}(x_1,x_2\in S, y_1,y_2\in T) - \mathbb{P}(x_1\in S, y_1\in T)\mathbb{P}(x_2\in S, y_2\in T)\right)\right)^{1/2}.\label{E:zadnjaAlijepo}
\end{align}
Consider two elements $(x_1,y_1)\neq (x_2,y_2)$ from $\Omega_i\cap\mathcal{F}$. Notice that, by the definition of $\Omega_i$, this implies that $x_1\neq x_2$ and $y_1\neq y_2$. We have
$$\mathbb{P}(x_1,x_2\in S, y_1,y_2\in T) = \frac{s(s-1)}{|X|(|X|-1)}\frac{t(t-1)}{|Y|(|Y|-1)}$$
and
$$\mathbb{P}(x_1\in S, y_1\in T)\mathbb{P}(x_2\in S, y_2\in T) = \frac{s^2t^2}{|X|^2|Y|^2},$$
so the contribution to \eqref{E:zadnjaAlijepo} coming from nondiagonal terms is nonpositive. Hence
$$\eqref{E:zadnjaAlijepo}\leq \frac{1}{st}\sum_{i=1}^{|Y|}\left(\sum_{(x,y)\in \Omega_i\cap\mathcal{F}}\mathbb{P}(x\in S, y\in T)\right)^{1/2} \leq \frac{1}{\sqrt{st|X||Y|}}\sum_{i=1}^{|Y|}|\Omega_i|^{1/2}\leq \sqrt{\frac{|Y|}{st}}.$$
Doing the same thing for the other sum (over $\mathcal{F}^{c}$), we get the statement of the lemma.
\end{proof}

We can now finish the proof of Proposition~\ref{P:crucial} using Markov's inequality. Let $s=\frac{2000\log N}{\epsilon(N)^4}$ and $t=\frac{|Y|\epsilon(N)^2}{50\log N}$. By Lemma~\ref{L:closenessSTXY} we see that the probability that
\begin{equation}\label{e:cond1}
 \left|\left\langle\frac{N^2\cdot 1_S\ast 1_T}{st},f\right\rangle - \left\langle\frac{N^2\cdot1_X\ast 1_Y}{|X||Y|},f\right\rangle\right| \leq 6\sqrt{\frac{|Y|}{st}}\leq \epsilon(N)
\end{equation}
is at least $\frac23$. Similarly, Lemma~\ref{L:additive} implies that
\begin{equation}\label{e:cond2}
 \|1_S\ast 1_T\|_2^2 \leq \frac{3st}{N^3} + \frac{3s^2t^2}{|X|^2|Y|^2}\|1_X\ast 1_Y\|_2^2,
\end{equation}
happens with probability at least $\frac23$ so with positive probability we have both of the requirements satisfied. Inequality \eqref{e:cond2} implies
$$\frac{s^2t^2}{N^3\|1_S\ast 1_T\|_2^2}\geq \min\left(st/6, \frac{|X|^2|Y|^2}{6N^3\|1_X\ast 1_Y\|_2^2}\right)\geq \min(st/{6}, |Y|/6)\geq \frac{6t\log N}{\epsilon(N)^2},$$
and this finishes the proof.

\section{Bounds for the sizes of extracted sets}\label{S:boundsforXY}

Recall that Theorem \ref{T:mainadditive} states that with high probability all pairs of subsets $(X,Y)$ of size at least $w(N)\log^2 N$ are $o(1)$-extracted by a random subset. In this section we deal with the question of how far the bound $w(N)\log^2 N$ is from optimal. We note that some of the calculations here have already been done in \cite[Section 8]{green}. Indeed, the main argument there already gives a nontrivial lower bound on the minimum size we must impose on $X$ and $Y$ in order for Theorem \ref{T:mainadditive} to hold. It is an easy consequence of Green's argument that with probability $\frac12-o(1)$ there is a subset $X\subset \mathbb{F}_2^n$, $N=2^n$ of size $\frac14\log_2 N\log_2\log_2 N$ such that $X+X\subset A$. Obviously, $X+X\subset A$ is a much stronger statement than is required to contradict the conclusion of Theorem~\ref{T:mainadditive}. In this section we show that the constant $\frac14$ can be made arbitrarily large if we relax this statement so that Theorem~\ref{T:mainadditive} only just fails. The main theorem is as follows.

\begin{theorem}\label{T:bounds}
	Fix $C>0$ and let $A$ be a random subset of $\mathbb{F}_2^n$. There exists $\epsilon > 0$ such that with probability bounded away from $0$ there exists a subset $X\subset \mathbb{F}_2^n$ of size at least $C\log N\log\log N$ such that for at least $\frac12+\epsilon$ of the pairs $(x_1,x_2)\in X\times X$ we have $x_1+x_2\in A$.
\end{theorem}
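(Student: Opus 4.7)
\textbf{Proof plan for Theorem~\ref{T:bounds}.}

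The natural candidates for $X$ are linear subspaces of $\mathbb{F}_2^n$. If $V\leq \mathbb{F}_2^n$ is a subspace of dimension $d$, then every sum $x_1+x_2$ with $x_1,x_2\in V$ lies in $V$, and each element of $V$ is hit exactly $|V|$ times this way; taking $X=V$, the fraction of pairs $(x_1,x_2)\in X\times X$ with $x_1+x_2\in A$ is therefore exactly $|V\cap A|/|V|$. It thus suffices to prove that for every $C$ there exists $\epsilon>0$ such that with probability bounded away from $0$ there is a subspace $V\leq\mathbb{F}_2^n$ of dimension $d=\lceil \log_2(Cn\log n)\rceil$ (so $|V|\geq C\log N\log\log N$) satisfying $|V\cap A|\geq (\tfrac12+\epsilon)|V|$.

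Let $Z$ be the number of such ``good'' subspaces and aim for a first-plus-second-moment argument. For fixed $V$, $|V\cap A|\sim\mathrm{Bin}(2^d,\tfrac12)$, and the standard anti-concentration estimate for binomials (Stirling, or Cram\'er's theorem with its usual polynomial correction) gives
\[
p:=\mathbb{P}(V\text{ is good})\;\gtrsim\; 2^{-d/2}\exp\bigl(-(2+o(1))\,\epsilon^2\cdot 2^d\bigr).
\]
The number of $d$-dimensional subspaces of $\mathbb{F}_2^n$ is $\binom{n}{d}_2\geq 2^{d(n-d)-O(d^2)}$, so choosing $\epsilon=\epsilon(C)$ to be a small constant of order $1/\sqrt{C}$ makes $d(n-d)\log 2$ comfortably dominate $2\epsilon^2\cdot 2^d$, yielding $\mathbb{E} Z\to\infty$.

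For the second moment, organise pairs $(V_1,V_2)$ of subspaces by $d':=\dim(V_1\cap V_2)$ and set $W=V_1\cap V_2$, $U_i=V_i\setminus W$. Since $|V_i\cap A|=|W\cap A|+|U_i\cap A|$ with $|W\cap A|$, $|U_1\cap A|$, $|U_2\cap A|$ independent binomials on $2^{d'}$, $2^d-2^{d'}$, $2^d-2^{d'}$ bits respectively, integrating out $|W\cap A|$ with a Gaussian approximation yields
\[
\mathbb{P}(V_1,V_2\text{ both good})\;\lesssim\; p^2\exp\bigl(O(\epsilon^2\cdot 2^{d'})\bigr).
\]
A standard Gaussian-binomial count shows that for fixed $V_1$ the number of $V_2$ with $\dim(V_1\cap V_2)=d'$ is $\approx 2^{(d-d')(n-d+d')}$, so the relative frequency of pairs at intersection level $d'\geq 1$ is smaller than that at $d'=0$ by a factor $2^{-d'(n-2d+d')}$. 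For $d\ll n$ this geometric decay outpaces the correlation blow-up $\exp(O(\epsilon^2\cdot 2^{d'}))$ at every $d'\geq 1$, provided $\epsilon$ is small enough; the diagonal ($d'=d$) contributes exactly $\mathbb{E} Z=o((\mathbb{E} Z)^2)$. Hence $\mathbb{E} Z^2=(1+o(1))(\mathbb{E} Z)^2$.

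Paley--Zygmund now gives $\mathbb{P}(Z>0)\geq (\mathbb{E} Z)^2/\mathbb{E} Z^2$, bounded away from $0$, so with positive probability there is a good subspace $V$, and taking $X=V$ finishes the proof. The main technical obstacle is the uniform second-moment bookkeeping across the intermediate values $1\leq d'\leq d-1$: one has to verify that the Gaussian-binomial count of ``near-coincident'' pairs shrinks fast enough to offset the correlated-probability blow-up $\exp(O(\epsilon^2\cdot 2^{d'}))$, which is precisely what forces the quantitative constraint $\epsilon\lesssim 1/\sqrt{C}$ and explains why $\epsilon$ must be allowed to depend on $C$.
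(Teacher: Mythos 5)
Your approach is fundamentally the same as the paper's: take $X$ to be a $k$-dimensional subspace $V$, observe that the proportion of good pairs is exactly $|V\cap A|/|V|$, and apply a second-moment argument (Paley--Zygmund / Chebyshev) to the count of subspaces $V$ with $|V\cap A|\geq(\tfrac12+\epsilon)|V|$, organizing the covariance by the dimension $d'=\dim(V_1\cap V_2)$. The choice of dimension, the scaling of $\epsilon$ with $C$, and the observation that geometric decay in the number of heavily-overlapping pairs must beat the correlation blow-up are all exactly the pivotal points of the paper's proof.

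There is, however, one genuine error in your bookkeeping. Your conclusion ``hence $\mathbb{E}Z^2=(1+o(1))(\mathbb{E}Z)^2$'' is false: the dominant contribution to $\mathbb{E}Z^2$ comes from pairs with $d'=0$, but in $\mathbb{F}_2^n$ these subspaces still share the zero vector, so $X_1$ and $X_2$ are \emph{not} independent even in the generic case. Your own estimate $\mathbb{P}(V_1,V_2\text{ both good})\lesssim p^2\exp(O(\epsilon^2\cdot 2^{d'}))$ evaluated at $d'=0$ gives $p^2\exp(O(\epsilon^2))$, a fixed constant factor bigger than $p^2$ that does not tend to $1$: $\epsilon$ is a constant depending on $C$, not a vanishing quantity. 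The paper deals with this head-on by conditioning on whether $0\in A$, computing the covariance $C_k^{(0)}$ exactly, and showing $P_k^{-2}C_k^{(0)}\leq(\tfrac12+\epsilon)^2$; this is precisely the term that prevents the second moment from being $(1+o(1))$ times the first moment squared, and it is why the paper's conclusion is $\mathbb{P}(X=0)\leq(\tfrac12+\epsilon)^2+o(1)$ rather than $o(1)$. Fortunately this error does not derail your argument: Paley--Zygmund with $\mathbb{E}Z^2\leq(1+O(\epsilon^2))(\mathbb{E}Z)^2$ still gives $\mathbb{P}(Z>0)\geq 1/(1+O(\epsilon^2))$, bounded away from $0$, which is all the theorem asks for. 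You should simply replace the ``$1+o(1)$'' claim with ``$1+O(\epsilon^2)$'' (or cite the cruder but fully elementary bound $(\tfrac12+\epsilon)^2$ the paper obtains from the ratio of binomial coefficients). Beyond this, the argument is a plan rather than a proof: the assertions ``integrating out $|W\cap A|$ with a Gaussian approximation'' and ``a standard Gaussian-binomial count'' hide the range-splitting ($d'\leq\log_2 n$ versus $d'>\log_2 n$) and the use of sharp binomial tail asymptotics on both sides that the paper carries out explicitly, and would need to be made precise.
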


As will be obvious from the proof, we will actually choose $X$ to be a subspace of relatively large dimension. Notice that in this case we have that
$$|\{(x_1,x_2)\colon x_1,x_2\in X, x_1+x_2\in A\}| = |X||A\cap X|,$$
so what we need to prove is that $|A\cap X|\geq (\frac12+\epsilon)|X|$. We should also mention that in this setting (i.e.\ when $X$ must be a subspace), the bound in Theorem \ref{T:bounds} is optimal up to a constant. This is easily proven using the first moment method.

Before moving to the proof, we state two simple counting lemmas we will need. As both are quite standard, we omit the proofs which can be found in e.g.\ \cite{green}.

\begin{lemma}\label{L:Ik}
	Let $I_k$ be the number of $k$-dimensional subspaces of $\mathbb{F}_2^n$. Then $I_k\geq 2^{nk-k^2}$.
\end{lemma}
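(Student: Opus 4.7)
The plan is to obtain $I_k$ explicitly as a ratio counting ordered bases, and then bound each factor of the resulting product separately. First I would count the number of ordered $k$-tuples of linearly independent vectors in $\mathbb{F}_2^n$: choosing them one at a time, the $i$-th vector must avoid the span of the previous $i$ vectors, so the count is $\prod_{i=0}^{k-1}(2^n - 2^i)$. The same reasoning applied inside a fixed $k$-dimensional subspace gives $\prod_{i=0}^{k-1}(2^k - 2^i)$ ordered bases per subspace, and hence
$$I_k = \prod_{i=0}^{k-1}\frac{2^n - 2^i}{2^k - 2^i} = \prod_{j=1}^{k}\frac{2^{n-k+j}-1}{2^j - 1},$$
where the second equality follows by cancelling the common factor $2^i$ from numerator and denominator and reindexing $j = k-i$.

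From here the lemma reduces to a single elementary inequality: for integers $a \geq b \geq 1$,
$$\frac{2^a - 1}{2^b - 1} \geq 2^{a-b},$$
which after clearing denominators becomes $2^a - 1 \geq 2^a - 2^{a-b}$, equivalent to $2^{a-b} \geq 1$. Applying this factor by factor with $a = n-k+j$ and $b = j$ (which requires $n \geq k$, a hypothesis we may freely assume since the statement is vacuous otherwise) yields
$$I_k \geq \prod_{j=1}^{k} 2^{n-k} = 2^{k(n-k)} = 2^{nk - k^2},$$
as claimed.

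There is no genuine obstacle in this argument; it is a routine Gaussian-binomial-coefficient computation. The only small point to watch is that the bound $\frac{2^a-1}{2^b-1} \geq 2^{a-b}$ is tight enough exactly as stated — slightly cruder estimates such as $2^n - 2^i \geq 2^{n-1}$ give only $2^{nk-k^2-k}$, losing an extra factor of $2^k$, so it is important to keep the numerator and denominator paired inside a single ratio rather than bounding them separately.
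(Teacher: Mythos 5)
Your proof is correct. The paper omits its own proof of this lemma, referring to Green's paper for the standard argument; yours is precisely that standard Gaussian-binomial-coefficient computation, and the factor-wise bound $\frac{2^a-1}{2^b-1}\geq 2^{a-b}$ is exactly the right level of precision to recover the exponent $nk-k^2$.
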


\begin{lemma}\label{L:Jkl}
	Let $J_k^{(l)}$ be the number of pairs of $k$-dimensional subspaces of $\mathbb{F}_2^n$ whose intersection is a $l$-dimensional subspace. Then $J_k^{(l)}\leq 2^{2nk-nl}$.
\end{lemma}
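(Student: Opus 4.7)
The plan is to stratify the pairs $(V,W)$ of $k$-dimensional subspaces with $\dim(V\cap W)=l$ according to their common intersection. Concretely, I would first fix an $l$-dimensional subspace $U\subset\mathbb{F}_2^n$ to serve as $V\cap W$, and then count ordered pairs $(V,W)$ of $k$-dimensional subspaces containing $U$ with $V\cap W=U$. Passing to the quotient $\mathbb{F}_2^n/U\cong \mathbb{F}_2^{n-l}$, such pairs correspond to ordered pairs of $(k-l)$-dimensional subspaces of $\mathbb{F}_2^{n-l}$ that meet only at the origin. Forgetting the disjointness constraint only inflates the count, so I would bound the number of these pairs by the square of the total number of $(k-l)$-dimensional subspaces of $\mathbb{F}_2^{n-l}$.

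The only quantitative input I need is the crude upper bound that $\mathbb{F}_2^N$ contains at most $2^{mN}$ subspaces of dimension $m$. This is immediate from counting ordered $m$-tuples of vectors in $\mathbb{F}_2^N$: there are only $(2^N)^m = 2^{mN}$ such tuples in total, and every $m$-dimensional subspace is determined by (at least one of) its ordered bases, which is one such tuple.

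Applying this bound once for the choice of $U\subset \mathbb{F}_2^n$ and once (squared) for the ordered pair of $(k-l)$-dimensional subspaces inside $\mathbb{F}_2^{n-l}$, I obtain
$$J_k^{(l)} \;\leq\; 2^{nl}\cdot \bigl(2^{(n-l)(k-l)}\bigr)^2 \;=\; 2^{\,nl + 2(n-l)(k-l)} \;=\; 2^{\,2nk - nl - 2l(k-l)} \;\leq\; 2^{\,2nk-nl},$$
where the final inequality uses $k\geq l$ to make the correction term $-2l(k-l)$ nonpositive. Since the argument is entirely elementary linear algebra and counting, there is no real obstacle here; the only thing worth checking is that the exponent arithmetic leaves the nonpositive slack $-2l(k-l)$, which is precisely what lets us absorb the loss incurred by using the crude bound $2^{mN}$ on the number of subspaces rather than the sharper (but messier) Gaussian binomial coefficient.
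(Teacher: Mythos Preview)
Your argument is correct: stratifying by the intersection $U$, passing to the quotient $\mathbb{F}_2^n/U$, and using the crude bound $2^{mN}$ on the number of $m$-dimensional subspaces of $\mathbb{F}_2^N$ gives exactly the exponent $2nk-nl-2l(k-l)\le 2nk-nl$, as you computed. The paper itself does not give a proof of this lemma; it simply declares it standard and refers to \cite{green}, so there is nothing to compare against beyond noting that your approach is the natural one.
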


In the course of the proof of Theorem \ref{T:bounds} we will also need rather precise upper bounds for an upper tail of a binomial distribution, more precise than the one provided by Proposition \ref{P:concentration}. Additionally, we will need a corresponding lower bound. Both of these are provided by the following proposition.

\begin{proposition}\label{P:boundsbinomial}
Let $Z$ be a $B(n,\frac12)$ random variable and $0<\epsilon<\frac12$ fixed. If we define $h(x) = (\frac12+x)\log (1+2x) + (\frac12-x)\log (1-2x)$, then
$$e^{-nh(\epsilon)-O(\log n)} \leq \mathbb{P}(Z\geq (\textstyle{\frac12}+\epsilon) n) \leq e^{-nh(\epsilon)}.$$
\end{proposition}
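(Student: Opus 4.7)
The plan is to prove the two inequalities separately using classical large-deviation tools. A direct computation shows that $h(\epsilon)$ is the Kullback--Leibler divergence between a Bernoulli$(\tfrac12+\epsilon)$ and a Bernoulli$(\tfrac12)$ distribution, so the proposition is the standard ``Chernoff meets Stirling'' sharpness result for binomial tails.

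For the upper bound I would apply the exponential moment method. Writing $Z=X_1+\cdots+X_n$ as a sum of independent Bernoulli$(\tfrac12)$ random variables, Markov's inequality applied to $e^{tZ}$ gives, for every $t>0$,
$$\mathbb{P}\bigl(Z\geq (\tfrac12+\epsilon)n\bigr)\leq \left(\tfrac{1+e^t}{2}\right)^n e^{-t(1/2+\epsilon)n}.$$
Optimising in $t$ forces $e^t=(1+2\epsilon)/(1-2\epsilon)$; plugging this back reduces the right-hand side to exactly $e^{-nh(\epsilon)}$, with no constants lost. This is the cleanest way to obtain the sharp exponent on the nose.

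For the lower bound it is enough to consider the single term $\mathbb{P}(Z=k_0)$ with $k_0=\lceil(\tfrac12+\epsilon)n\rceil$. Applying Stirling's formula $m!=\sqrt{2\pi m}(m/e)^m(1+O(1/m))$ to each of the three factorials in $\binom{n}{k_0}$ yields
$$\binom{n}{k_0}2^{-n}=\frac{1+o(1)}{\sqrt{2\pi n\, p(1-p)}}\exp\bigl(-nh(p-\tfrac12)\bigr),$$
where $p=k_0/n=\tfrac12+\epsilon+O(1/n)$. Since $\epsilon$ is fixed and $h$ is smooth on $(-\tfrac12,\tfrac12)$, we have $nh(p-\tfrac12)=nh(\epsilon)+O(1)$, and the $1/\sqrt{n}$ prefactor is absorbed into the $e^{-O(\log n)}$ slack permitted by the statement.

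I do not anticipate any genuine obstacle: both arguments are textbook. The only care needed is in bookkeeping the polynomial-in-$n$ prefactors in the lower bound, together with the harmless $O(1/n)$ rounding error incurred when passing from $(\tfrac12+\epsilon)n$ to $k_0$; both are comfortably absorbed into the $e^{-O(\log n)}$ term. If one wanted a fully self-contained write-up one could skip Stirling and instead compare $\binom{n}{k_0}$ to the dominant term using the ratio $\binom{n}{k+1}/\binom{n}{k}=(n-k)/(k+1)$, but the Stirling route is shorter.
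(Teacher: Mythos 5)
Your proof is correct and follows the standard route: the exponential moment (Chernoff) method gives the sharp upper bound $e^{-nh(\epsilon)}$ exactly upon optimizing $t$, and Stirling applied to the single term $\binom{n}{k_0}2^{-n}$ with $k_0=\lceil(\tfrac12+\epsilon)n\rceil$ gives the lower bound up to a polynomial factor. The paper does not prove this proposition itself but cites Alon--Spencer, Appendix A, which contains essentially this same argument, so you have reconstructed the intended proof.
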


This proposition is relatively standard and can be found in \cite[Appendix A]{alonspencer}.

We finally move to the proof of the main result in this section. As we noted above, we will use the second moment method. Fix $\epsilon > 0$ and let $k = \lfloor \log_2 n + \log_2\log_2 n\rfloor + c$, where $c>0$ is a constant to be specified later. Let $\{V_i\}$ be the family of all $k$-dimensional subspaces of $\mathbb{F}_2^n$ and let $X_i$ be the random variable indicating if $|A\cap V_i|\geq (\frac12+\epsilon)|V_i|$ and $X=\sum_i X_i$ be the number of $V_i$ such that $|A\cap V_i|\geq (\frac12+\epsilon)|V_i|$. We claim that with probability bounded away from zero we have $X>0$, equivalently, with probability bounded away from one we have $X=0$.

Let $P_k=\mathbb{P}(|A\cap V_i|\geq (\frac12+\epsilon)|V_i|)$ and $C_k^{(l)}=\mathrm{cov}(X_i,X_j)$ for $i\neq j$ and $\mathrm{dim} (V_i\cap V_j) = l$. We will use the notation introduced in the statements of Lemma \ref{L:Ik}, Lemma \ref{L:Jkl} and Proposition \ref{P:boundsbinomial}.

Obviously, $\mathbb{E}X = I_kP_k$. On the other hand
$$\mathrm{Var} X = \sum_{i,j}\mathrm{cov}(X_i,X_j) = \sum_{l=0}^k J_k^{(l)}C_k^{(l)}.$$

Note that by Chebyshev's inequality we have
\begin{equation}\label{E:chebyshev}
	\mathbb{P}(X=0)\leq \frac{\mathrm{Var} X}{(\mathbb{E}X)^2} = \sum_{l=0}^k I_k^{-2}P_k^{-2}J_k^{(l)}C_k^{(l)},
\end{equation}
and our aim now is to prove that the quantity on the right is bounded away from $1$.

We will inspect three different ranges for $l$. To begin with, let $l=0$. Obviously, $I_k^{-2}J_k^{(0)}\leq 1$. If we let $K=2^k$, $H = (\frac12+\epsilon)K$ and $Z$ be a $B(K-1,\frac12)$ random variable, by conditioning on whether $0$ (which is the only element of $V\cap W$) is in $A$, it is easy to see that
\begin{align*}
C_k^{(0)} &= \textstyle{\frac12}\mathbb{P}(Z\geq H-1)^2 + \textstyle{\frac12}\mathbb{P}(Z\geq H)^2 - \left(\textstyle{\frac12}\mathbb{P}(Z\geq H-1) + \textstyle{\frac12}\mathbb{P}(Z\geq H)\right)^2\\
&= \textstyle{\frac14}\left(\mathbb{P}(Z\geq H-1) - \mathbb{P}(Z\geq H)\right)^2.
\end{align*}
On the other hand, by the same conditioning we get
$$P_k = \textstyle{\frac12}\mathbb{P}(Z\geq H-1) + \textstyle{\frac12}\mathbb{P}(Z\geq H),$$
and hence
$$P_k^{-2}C_k^{(0)}\leq \left(\frac{\mathbb{P}(Z= H-1)}{\mathbb{P}(Z= H-1) + \mathbb{P}(Z= H)}\right)^2 \leq (\textstyle{\frac12}+\epsilon)^2,$$
where the last inequality follows by comparing appropriate binomial coefficients. Combining these observations we get
\begin{equation}\label{E:l0}
I_k^{-2}P_k^{-2}J_k^{(0)}C_k^{(0)}\leq (\textstyle{\frac12}+\epsilon)^2.
\end{equation}

Suppose now that $1\leq l\leq \log_2 n$, and let $d = k-l$ and $D=2^d$. Obviously, $D\geq \log_2 n$. We will bound the $C_k^{(l)}$ by the probability 
$$\mathbb{P}(|A\cap V_i|\geq (\textstyle{\frac12}+\epsilon)|V_i|, |A\cap V_j|\geq (\textstyle{\frac12}+\epsilon)|V_j|),$$
where $i\neq j$ and $V_i$ and $V_j$ have $l$-dimensional intersection $W$. Notice that $|V_i\setminus W|= |V_j\setminus W| = \textstyle{\frac{D-1}{D}}2^k$, and if we want both of the events to hold, then certainly $V_1\setminus W$ and $V_2\setminus W$ each must contain at least $(\frac12+\epsilon-1/D)2^k$ elements from $A$, since $W$ has $2^k/D$ elements. However, by Proposition \ref{P:boundsbinomial}, the probability of this happening is at most
\begin{align*}
\exp\left(-2\cdot \textstyle{\frac{D-1}{D}}2^kh(\frac{\epsilon D}{D-1}-\frac{1}{2(D-1)})\right) &= \exp\left(-2^{k+1}h(\textstyle{\frac{\epsilon D}{D-1}}-\textstyle{\frac{1}{2(D-1)}}) + O(2^k / D)\right)\\
&= \exp\left(-2^{k+1}h(\epsilon) + O(2^k / D)\right)\\
&= \exp\left(-2^{k+1}h(\epsilon) + O(2^l)\right).
\end{align*}
The second equality above follows by the intermediate value theorem. After using Lemma \ref{L:Jkl} and noticing that for each $1\leq l\leq \log_2 n$ we have $O(2^l)-nl\leq O(1)-n$ we can conclude
$$\sum_{l=1}^{\log_2 n}J_k^{(l)}C_{k,l} \leq 2^{2nk - n + O(\log_2\log_2 n)}e^{-2^{k+1}h(\epsilon)}.$$
We can now use the lower bound from Proposition \ref{P:boundsbinomial} to bound $P_k$
$$P_k\geq e^{-2^kh(\epsilon)-O(k)}.$$
Using this and Lemma \ref{L:Ik} we get
\begin{equation}\label{E:l1logn}
\sum_{l=1}^{\log_2 n}I_k^{-2}P_k^{-2}J_k^{(l)}C_k^{(l)} \leq 2^{2k^2 + O(k) - n},
\end{equation}
and this is $o(1)$.

Finally, for $l\geq \log_2 n$ we can bound $C_k^{(l)}$ trivially by $1$, and get, similarly as in the previous case,
\begin{equation}\label{E:lloglogn}
\sum_{l=\log_2 n}^kI_k^{-2}P_k^{-2}J_k^{(l)}C_k^{(l)} \leq k2^{2k^2 - n\log_2 n} e^{2^{k+1}h(\epsilon) + O(k)} \leq 2^{2k^2 -n\log_2 n} e^{2^{c+1}h(\epsilon)n\log_2 n + O(k)}.
\end{equation}
This is $o(1)$ as long as $2^{c+1}h(\epsilon) < \log 2$. Now from \eqref{E:chebyshev}, \eqref{E:l0}, \eqref{E:l1logn} and \eqref{E:lloglogn}
$$P(X=0)\leq (\textstyle{\frac12}+\epsilon)^2+o(1),$$
as long as $2^{c+1}h(\epsilon) < \log 2$. Since $h(\epsilon)\to 0$ as $\epsilon\to 0$, we see that we can have $c$ arbitrary large if we choose $\epsilon$ small enough.

\section{Further comments}\label{S:furthercomments}

We have phrased Theorem \ref{T:mainadditive} in terms of the general finite group $G$. One could, of course, ask the same question for a specific class of groups, say, $\mathbb{Z}/N\mathbb{Z}$ or $\mathbb{F}_2^n$. For example, it is possible to prove that in $\mathbb{Z}/N\mathbb{Z}$ with high probability there exists a subset $X$ of size about $2\log_2 N$ such that $X+X\subset A$. This already gives a bound for the sizes of sets which we can hope to be extracted. One could presumably push this a bit further similarly as we did for $\mathbb{F}_2^n$ in Theorem~\ref{T:bounds}. 

Notice that in the Erd\H{o}s-R\'{e}nyi model, the bound $w(N)\log_2 N$ in Theorem~\ref{T:erdosrenyi} is quite close to optimal. Indeed, the clique number in this graph is with high probability about $2\log_2 N$. One could consider this as a (weak) indication that it might be possible to push the bound in Theorem~\ref{T:mainadditive} further down, maybe even to something like $w(N)\log N\log\log N$. However, methods such as those in this paper are not sufficient to achieve anything close to that.

Our result can also be considered from the perspective of randomness extractors. We say that a function is a randomness extractor if it transforms every (e.g.\ pair or sequence of) not too degenerate independent random variables into a Bernoulli random variable which is quite close to uniform. In other words, a randomness extractor takes a bad nonuniform source and creates a nice almost uniform output.

A folklore example of this phenomena is the so-called von Neumann extractor. Suppose that $(X_n)$ is a sequence of independent identically distributed nondegenerate Bernoulli random variables. One would like to construct a function of this sequence which produces a symmetric Bernoulli random variable. Let $f((x_n))=x_m$, where $m$ is the smallest even index such that $x_m\neq x_{m+1}$. It is easy to see that this function does the job.

Of course, the definition as stated above is far from precise, we give a rigorous definition adapted to the setting we will be interested in. Given a random variable $X$ with values in a finite set $S$, we measure how nondegenerate it is (or, at the other extreme, how uniform it is) with normalized min-entropy (we will call it entropy from now on)
$$H(X)=\frac{\min_{s\in S}\log 1/\mathbb{P}(X=s)}{\log |S|}.$$
To gain some intuition, it is worth mentioning that entropy takes values between $0$ and $1$, and that $H(X)=0$ if and only if $X$ concentrates on a single value (i.e.\ is completely deterministic), and $H(X)=1$ if and only if $X$ is uniform on $S$ (i.e.\ is as unpredictable as one could possibly hope).

Fix $\epsilon, c>0$ (one should think of $\epsilon$ as being close to $0$ and $c$ as close to $1$). We will say that $f\colon S\times S\to\{0,1\}$ is a $(\epsilon, c)$-randomness extractor if for every pair of independent random variables $X$ and $Y$ with values on $S$ satisfying $H(X),H(Y) > \epsilon$, we have $H(f(X,Y))>c$. 

It was proven by Chor and Goldreich \cite{flat}, that it is only necessary to check that $f(X,Y)$ extracts randomness well (i.e.\ satisfies the required lower bound on the entropy) when $X$ and $Y$ are \emph{flat}, that is to say distribution of each of them is supported and uniformly distributed on a subset of $S$. Having this in mind, our result proves that for any fixed $\delta>0$, for a randomly chosen set $A$ of density $\frac12$, the function $(x,y)\to 1_A(xy)$ is $(\delta,1-o(1))$-randomness extractor with probability $1-o(1)$. It is a major open problem in the field to find explicit $(\delta,1-o(1))$-randomness extractors for small values of $\delta$. For example, to the best of the author's knowledge, the only explicit $(\delta,1-o(1))$-randomness extractor for which $\delta < \frac12$ was constructed by Bourgain \cite{bourgain}. For more about this and related issues see e.g.\ the first section of the paper by Barak, Impagliazzo, and Wigderson \cite{survey}.

\bibliography{extractors}{}

\begin{thebibliography}{AAAS94}

\bibitem[AAAS94]{visibility}
P.~K. Agarwal, N.~Alon, B.~Aronov, and S.~Suri.
\newblock Can visibility graphs be represented compactly?
\newblock {\em Discrete Comput. Geom.}, 12(3):347--365, 1994.
\newblock ACM Symposium on Computational Geometry (San Diego, CA, 1993).

\bibitem[AS08]{alonspencer}
Noga Alon and Joel~H. Spencer.
\newblock {\em The probabilistic method}.
\newblock Wiley-Interscience Series in Discrete Mathematics and Optimization.
  John Wiley \& Sons, Inc., Hoboken, NJ, third edition, 2008.
\newblock With an appendix on the life and work of Paul Erd{\H{o}}s.

\bibitem[BIW06]{survey}
Boaz Barak, Russell Impagliazzo, and Avi Wigderson.
\newblock Extracting randomness using few independent sources.
\newblock {\em SIAM J. Comput.}, 36(4):1095--1118 (electronic), 2006.

\bibitem[Bou05]{bourgain}
Jean Bourgain.
\newblock More on the sum-product phenomenon in prime fields and its
  applications.
\newblock {\em Int. J. Number Theory}, 1(1):1--32, 2005.

\bibitem[CG88]{flat}
Benny Chor and Oded Goldreich.
\newblock Unbiased bits from sources of weak randomness and probabilistic
  communication complexity.
\newblock {\em SIAM J. Comput.}, 17(2):230--261, 1988.
\newblock Special issue on cryptography.

\bibitem[CS10]{crootsisask}
Ernie Croot and Olof Sisask.
\newblock A probabilistic technique for finding almost-periods of convolutions.
\newblock {\em Geom. Funct. Anal.}, 20(6):1367--1396, 2010.

\bibitem[Gre05]{green}
Ben Green.
\newblock Counting sets with small sumset, and the clique number of random
  {C}ayley graphs.
\newblock {\em Combinatorica}, 25(3):307--326, 2005.

\bibitem[Hoe63]{hoeffding}
Wassily Hoeffding.
\newblock Probability inequalities for sums of bounded random variables.
\newblock {\em J. Amer. Statist. Assoc.}, 58:13--30, 1963.

\bibitem[IK04]{kowalskiiwaniec}
Henryk Iwaniec and Emmanuel Kowalski.
\newblock {\em Analytic number theory}, volume~53 of {\em American Mathematical
  Society Colloquium Publications}.
\newblock American Mathematical Society, Providence, RI, 2004.

\end{thebibliography}
\bibliographystyle{alpha}

\end{document}